\newtheorem{prop}{Proposition}[section]
\newtheorem{thm}[prop]{Theorem}
\newtheorem{lemma}[prop]{Lemma}
\newtheorem{coroll}[prop]{Corollary}
\theoremstyle{remark}
\newtheorem{rmk}[prop]{Remark}
\newcommand{\E}{\mathop{{}\mathbb{E}}\nolimits}
\newcommand{\cF}{\mathscr{F}}
\renewcommand{\P}{\mathbb{P}}
\newcommand{\erre}{\mathbb{R}}
\newcommand{\cX}{\mathscr{X}}
\renewcommand{\geq}{\geqslant}
\renewcommand{\leq}{\leqslant}
\newcommand{\indp}[1]{\mathbbm{1}_{\{#1\}}}
\numberwithin{equation}{section}
\DeclarePairedDelimiter\abs{\lvert}{\rvert}
\DeclarePairedDelimiter\norm{\lVert}{\rVert}
\DeclarePairedDelimiterX\ip[2]{\langle}{\rangle}{#1,#2}
\renewenvironment{itemize}{%
  \ifnum\@itemdepth>3 \@toodeep
  \else \advance\@itemdepth\@ne
    \edef\@itemitem{labelitem\romannumeral\the\@itemdepth}%
    \list{\csname\@itemitem\endcsname}%
      {\def\makelabel##1{\hss\llap{\upshape##1}}}%
  \fi
}{%
  \endlist
}
\begin{document}

\title{On some semi-parametric estimates for \\ European option prices}

\author{Carlo Marinelli\thanks{Department of Mathematics, University
    College London, Gower Street, London WC1E 6BT, UK.}}

\date{\normalsize June 19, 2023}

\maketitle

\begin{abstract}
  We show that an estimate by de la Pe\~na, Ibragimov and Jordan for
  \(\E(X-c)^+\), with \(c\) a constant and \(X\) a random variable of
  which the mean, the variance, and \(\P(X \leq c)\) are known,
  implies an estimate by Scarf on the infimum of \(\E(X \wedge c)\)
  over the set of positive random variables \(X\) with fixed mean and
  variance. This also shows, as a consequence, that the former
  estimate implies an estimate by Lo on European option prices.
\end{abstract}


\section{Introduction}
A remarkable result by Scarf \cite{Scarf} provides an explicit
solution to the problem of minimizing \(\E(X \wedge c)\), with \(c\) a
positive constant, over the set of all positive random variables \(X\)
with given mean and variance (see Theorem \ref{thm:Scarf} below). The
infimum is shown to have two different expressions depending on
whether the parameter \(c\) is above or below a certain threshold and
to be attained by a random variable taking only two values. About
thirty years later after the publication of \cite{Scarf}, Lo
\cite{Lo87} noticed that Scarf's result immediately implies an upper
bound for \(\E(X-c)^+\), with \(X\) and \(c\) as before. This has an
obvious financial interpretation as an upper bound for the price at
time zero of a European call option with strike \(c\) on an asset with
value at maturity equal to \(X\) (in discounted terms, assuming that
expectation is meant with respect to a pricing measure). More
recently, de la Pe\~na, Ibragimov and Jordan \cite{dlP:opt} obtained,
among other things, a sharp upper bound for \(\E(X-c)^+\) over the set
of random variables \(X\) for which mean and variance as well as the
probability \(\P(X \leq c)\) are known (see Theorem \ref{thm:dlP}
below).

Our goal is to prove that the estimate by de la Pe\~na, Ibragimov and
Jordan is stronger than Scarf's estimate, in the sense that the former
implies the latter. This may appear somewhat counterintuitive, as the
former estimate requires the extra input \(\P(X \leq c)\), while the
latter has an extra positivity constraint.

The proof by Scarf, while relatively elementary, is quite ingenious. A
different proof, covering also substantial generalizations, has been
obtained by Bertsimas and Popescu \cite{BePo} using duality methods in
semi-definite optimization. The arguments used in \cite{dlP:opt} are
instead based on classical probabilistic inequalities and a ``toy''
version of decoupling.
The proof of Scarf's estimate given here is entirely elementary and
self-contained. Starting from an alternative proof of the relevant
estimates in \cite{dlP:opt}, another proof of Scarf's result is
obtained that is certainly not as deft as the original, but that would
hopefully seem more natural to anyone who, like the author, would
hardly ever come up with the ingenious idea used in \cite{Scarf}.  Our
proof is based, roughly speaking, on a representation of the set of
random variables with given mean and variance as union of subsets of
equivalent random variables, where two random variables \(X_1\) and
\(X_2\) are equivalent if \(\P(X_1 \leq c) = \P(X_2 \leq c)\). This
allows to establish a link between the two inequalities and to reduce
the problem of proving a version of Scarf's result without the
positivity constraint to the minimization of a function of one real
variable. Finally, the positivity constraint is taken into account,
thus establishing the full version of Scarf's result. Moreover, the
fact that optimizers exist and are given by two-point distributed
random variables appears in a natural way and plays an important role
in the proof.

The result proved in this article may also clarify, or at least
complement, several qualitative remarks made in \cite{dlP:opt} about
the relation between the two above-mentioned inequalities. For
instance, the authors note that their inequality is simpler than Lo's
inequality in the sense that the right-hand side does not depend on
the value of \(c\). Here we show that this is only due to the
positivity constraint in \cite{Scarf}, with very explicit calculations
showing how the threshold value for \(c\) arises. Moreover, the
sharpness of their inequality is proved in a much more natural way,
i.e. showing that a two-point distributed random variable always
attains the bound.

Apart from the application to bounds for option prices, estimates of
(functions of) \(X \wedge c\), sometimes called the Winsorization of
\(X\), are important in several areas of applied probability. For
results in this direction, as well as for an informative discussion
with references to the literature, we refer to \cite{Pine:tr}.


\section{Preliminaries}
Let \((\Omega,\cF,\P)\) be a probability space, on which all random
elements will be defined. We shall write, for simplicity, \(L^2\) to
denote \(L^2(\Omega,\cF,\P)\), and \({\norm{\cdot}}_2\) for its norm.
For any \(m \in \erre\) and \(\sigma \in \erre_+\), the sphere of
\(L^2\) of radius \(\sigma\) centered in \(m\) will be denoted by
\(\cX_{m,\sigma}\), and just by \(\cX\) if \(m=0\) and \(\sigma=1\).
In other words, \(\cX_{m,\sigma}\) stands for the set of random
variables \(X\) such that \(\E X=m\) and
\(\operatorname{Var}(X) = \E(X-m)^2 = \sigma^2\).  It is clear that
\(\cX_{m,\sigma} = m + \sigma \cX\).
The intersection of \(\cX_{m,\sigma}\) with the cone of random
variables bounded below by \(\alpha \in \erre\) will be denoted by
\(\cX_{m,\sigma}^\alpha\).  It is easily verified that, for any
\(m \in \erre\) and \(\sigma \in \erre_+\),
\begin{equation}
  \label{eq:aff}
  m + \sigma \cX^\alpha = \cX_{m,\sigma}^{m+\sigma\alpha}.  
\end{equation}

Recall that a random variable is said to be two-point distributed if
it takes only two (distinct) values. The set of two-point distributed
random variables belonging to \(\cX\) can be parametrized by the open
interval \(\mathopen]0,1\mathclose[\): let \(X\) take the values
\(x,y \in \erre\), \(x < y\), and set
\[
  p := \P(X=x), \qquad 1-p = \P(X=y).
\]
Then \(X \in \cX\) if and only if
\[
px + (1-p)y = 0, \qquad px^2 + (1-p)y^2 = 1,
\]
which imply
\begin{equation}
  \label{eq:xy}
  x = - \biggl( \frac{1-p}{p} \biggr)^{1/2}, \qquad
  y = \biggl( \frac{p}{1-p} \biggr)^{1/2}.
\end{equation}
Note that \(p=0\) and \(p=1\) are not allowed, hence
\(p \in \mathopen]0,1\mathclose[\). This is also obvious a priori, as
there is no degenerate random variable with mean zero and variance
one.
The following simple observations, the proof of which is an immediate
consequences of \eqref{eq:xy} and elementary algebra, will be useful.
\begin{lemma}
  \label{lm:2p}
  Let \(X \in \cX\) be the \(\{x,y\}\)-valued random variable
  identified by the parameter \(p \in \mathopen]0,1\mathclose[\), and
  \(c \in \erre\).
  \begin{itemize}
  \item[(i)] If \(c \geq 0\), then
    \[
      x < c \leq y \quad \text{if and only if} \quad
      p \geq \frac{c^2}{1+c^2}.
    \]
  \item[(ii)] If \(c < 0\), then
    \[
      x \leq c < y \quad \text{if and only if} \quad
      p \leq \frac{1}{1+c^2}.
    \]
  \end{itemize}
\end{lemma}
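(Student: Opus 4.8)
The plan is to reduce each of the two equivalences to a single scalar inequality that can be squared without changing direction. First I would record the sign information contained in \eqref{eq:xy}: for every $p \in \mathopen]0,1\mathclose[$ both $\bigl((1-p)/p\bigr)^{1/2}$ and $\bigl(p/(1-p)\bigr)^{1/2}$ are strictly positive, so $x < 0 < y$. This single observation disposes of one of the two inequalities in each item for free, depending on the sign of $c$, and leaves only the other to analyse.

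For (i), with $c \geq 0$, the inequality $x < c$ is automatic since $x < 0 \leq c$, so $x < c \leq y$ holds if and only if $c \leq y = \bigl(p/(1-p)\bigr)^{1/2}$. Both sides being nonnegative, this is equivalent to $c^2 \leq p/(1-p)$, and clearing the positive denominator gives $c^2(1-p) \leq p$, i.e. $c^2 \leq p(1+c^2)$, which is precisely $p \geq c^2/(1+c^2)$.

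For (ii), with $c < 0$, the inequality $c < y$ is automatic since $c < 0 < y$, so $x \leq c < y$ holds if and only if $x = -\bigl((1-p)/p\bigr)^{1/2} \leq c$, equivalently $\bigl((1-p)/p\bigr)^{1/2} \geq -c$. Here $-c > 0$, so both sides are strictly positive and squaring is legitimate, yielding $(1-p)/p \geq c^2$, i.e. $1-p \geq c^2 p$, i.e. $1 \geq p(1+c^2)$, which is $p \leq 1/(1+c^2)$.

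The only step requiring a little care is ensuring the correct direction when squaring, which is exactly why the sign of $c$ is used to identify which of $x < c$ or $c < y$ is the binding constraint and which is vacuous; once that is settled the computation is elementary algebra, so I do not expect any genuine obstacle here.
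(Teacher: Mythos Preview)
Your argument is correct and is exactly the elementary computation the paper has in mind: the paper does not write out a proof of this lemma at all, stating only that it is ``an immediate consequence of \eqref{eq:xy} and elementary algebra.'' Your reduction via the sign observation $x<0<y$ followed by squaring the remaining inequality is precisely that elementary algebra made explicit.
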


We shall also need the following elementary lattice identities.
\begin{lemma}
  \label{lm:latt}
  Let \(a,b,c \in \erre\). The following holds:
    \begin{itemize}
  \item[(i)] \(a + (b \wedge c) = (a+b) \wedge (a+c)\);
  \item[(ii)] if \(a \geq 0\), then
    \(a (b \wedge c) = (ab) \wedge (ac)\);
  \item[(iii)] \((a-b)^+ = a - (a \wedge b)\)
  \end{itemize}
\end{lemma}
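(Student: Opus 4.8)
The three assertions are identities between real numbers, so the plan is to verify each of them pointwise, either by a short case distinction or, more uniformly, by invoking the elementary formula $b \wedge c = \tfrac12\bigl(b+c-\abs{b-c}\bigr)$, valid for every $b,c\in\erre$. I would handle (i) and (ii) by this formula and then deduce (iii) from (i).

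For (i), substituting the formula on both sides reduces the claim to the obvious identity $(a+b)-(a+c)=b-c$ inside the absolute value:
\[
  a + (b\wedge c)
  = a + \tfrac12\bigl(b+c-\abs{b-c}\bigr)
  = \tfrac12\Bigl((a+b)+(a+c)-\abs{(a+b)-(a+c)}\Bigr)
  = (a+b)\wedge(a+c).
\]
For (ii), the same substitution turns the claim into $a\abs{b-c}=\abs{ab-ac}$, that is, $a\abs{b-c}=\abs{a}\,\abs{b-c}$, which is exactly where the hypothesis $a\geq0$ enters; equivalently, $a\geq0$ is precisely the condition under which $b\leq c$ forces $ab\leq ac$, so the plain two-case argument (both sides equal $ab$ if $b\leq c$, both equal $ac$ otherwise) goes through. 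Without the sign condition the identity fails, as $a=-1$, $b=0$, $c=1$ already shows.

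For (iii), I would use $-(a\wedge b)=(-a)\vee(-b)$ together with the supremum version of (i) (obtained by applying (i) to the negated arguments): $a-(a\wedge b)=a+\bigl((-a)\vee(-b)\bigr)=(a-a)\vee(a-b)=0\vee(a-b)=(a-b)^+$. Alternatively, the bare case distinction is just as quick: if $a\geq b$ then $a\wedge b=b$ and both sides equal $a-b$, while if $a<b$ then $a\wedge b=a$ and both sides equal $0$.

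I do not expect any genuine obstacle here: the lemma is entirely computational, and the only point that deserves a word of care is the role of the sign condition in (ii), namely that multiplication by a nonnegative scalar is order-preserving on $\erre$ (and order-reversing otherwise).
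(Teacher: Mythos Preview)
Your proof is correct. The paper simply declares (i) and (ii) ``clear'' and, for (iii), uses the decomposition $(a-b)^+=(a-b)+(a-b)^-$ together with (i) to get $(a-b)^-=-\bigl((a-b)\wedge 0\bigr)=b-(a\wedge b)$; your route via the half-sum formula $b\wedge c=\tfrac12(b+c-\abs{b-c})$ and the $\vee$-dual of (i) is a minor variant of the same elementary manipulation, and your case-by-case alternative for (iii) is also explicitly mentioned by the paper.
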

\begin{proof}
  The identities in (i) and (ii) are clear. The identity in (iii) can
  be verified ``case by case'', but it can also be deduced from the
  identity
  \[
    (a-b)^+ = (a-b) + (a-b)^-,
  \]
  where, using (i),
  \[
    (a-b)^- = - \bigl( (a-b) \wedge 0 \bigr) = b - (a \wedge b),
  \]
  from which the claim follows immediately.
\end{proof}


\section{de la Pe\~na-Ibragimov-Jordan bound}
The following sharp estimates are proved in
\cite{dlP:opt}.
\begin{thm}[de~la~Pe\~na, Ibragimov, and Jordan]
  \label{thm:dlP}
  Let \(X \in \cX_{m,\sigma}\) and \(c \in \erre\). Setting
  \(p_0 := \P(X>c)\), one has
  \begin{equation}
    \label{eq:dlP}
    (m-c)p_0 \leq \E(X-c)^+ \leq (m-c)p_0 + \sigma(p_0-p_0^2)^{1/2}.
  \end{equation}
\end{thm}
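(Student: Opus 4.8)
The plan is to deduce both inequalities in \eqref{eq:dlP} from elementary bounds on the covariance between $X$ and the indicator of $\{X>c\}$. First I would observe that $(X-c)^+ = (X-c)\indp{X>c}$ pointwise (cf.\ Lemma~\ref{lm:latt}(iii)), so that, decomposing $X-c=(X-m)+(m-c)$,
\[
  \E(X-c)^+ = \E\bigl[(X-m)\indp{X>c}\bigr] + (m-c)\,\E\bigl[\indp{X>c}\bigr]
            = \E\bigl[(X-m)\indp{X>c}\bigr] + (m-c)\,p_0 .
\]
Since $\E(X-m)=0$, the expectation on the right is unaffected by subtracting the constant $p_0$ from the indicator, hence it equals $\operatorname{Cov}\bigl(X,\indp{X>c}\bigr)$. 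Consequently \eqref{eq:dlP} is equivalent to the two-sided estimate
\[
  0 \;\leq\; \operatorname{Cov}\bigl(X,\indp{X>c}\bigr) \;\leq\; \sigma\,(p_0-p_0^2)^{1/2},
\]
and it suffices to establish the two bounds on the covariance.

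For the lower bound I would exploit that $x\mapsto x$ and $x\mapsto\indp{x>c}$ are both nondecreasing, so $X$ and $\indp{X>c}$ are comonotone: if $X'$ denotes an independent copy of $X$, then $(X-X')\bigl(\indp{X>c}-\indp{X'>c}\bigr)\geq 0$ pointwise, and taking expectations gives $2\operatorname{Cov}\bigl(X,\indp{X>c}\bigr)\geq 0$. (If one wishes to sidestep covariances here, the lower bound in \eqref{eq:dlP} is in any case immediate: for $m\leq c$ its left-hand side is nonpositive and thus dominated by $\E(X-c)^+\geq 0$, while for $m>c$ one has $(m-c)p_0\leq m-c=\E(X-c)\leq\E(X-c)^+$.)

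For the upper bound I would invoke the Cauchy--Schwarz inequality in the centered form
\[
  \operatorname{Cov}\bigl(X,\indp{X>c}\bigr)
  = \E\bigl[(X-m)\bigl(\indp{X>c}-p_0\bigr)\bigr]
  \leq {\norm{X-m}}_2 \, {\norm{\indp{X>c}-p_0}}_2
  = \sigma\,\bigl(p_0-p_0^2\bigr)^{1/2},
\]
where I used $\E(X-m)^2=\sigma^2$ and $\operatorname{Var}\bigl(\indp{X>c}\bigr)=p_0-p_0^2$. Combined with the identity above, this yields \eqref{eq:dlP}.

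The computations are short; the step I expect to require the most care is the centering in the last display. Applying Cauchy--Schwarz directly to $\E\bigl[(X-m)\indp{X>c}\bigr]$ would only give the weaker bound $\sigma p_0^{1/2}$, and it is precisely the mean-zero property of $X-m$ that permits replacing $\indp{X>c}$ by $\indp{X>c}-p_0$, shrinking its $L^2$-norm from $p_0^{1/2}$ to $(p_0-p_0^2)^{1/2}$. I would also treat the degenerate cases $p_0\in\{0,1\}$ separately, where $\indp{X>c}$ is a.s.\ constant and all three terms in \eqref{eq:dlP} coincide. Finally, although the statement does not demand it, the sharpness noted in \cite{dlP:opt} falls out of the same argument: equality in Cauchy--Schwarz forces $X-m$ to be a.s.\ a nonnegative multiple of $\indp{X>c}-p_0$, i.e.\ $X$ to be two-point distributed, and conversely, for each $p_0\in\mathopen]0,1\mathclose[$, Lemma~\ref{lm:2p} and \eqref{eq:xy} supply a two-point law in $\cX_{m,\sigma}$ that attains the bound.
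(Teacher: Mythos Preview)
Your argument is correct, and for the upper bound it is essentially the paper's proof: both center the indicator, write \(\E(X-c)^+-(m-c)p_0=\E\bigl[(X-m)(\indp{X>c}-p_0)\bigr]\), and apply Cauchy--Schwarz. You frame this as a covariance bound, the paper does the same add-and-subtract manipulation without the name; the content is identical, including your remark that the centering is what improves \(p_0^{1/2}\) to \((p_0-p_0^2)^{1/2}\).

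For the lower bound the routes differ. The paper assumes \(m\geq c\) (otherwise trivial) and shows \(\E X\indp{X>c}\geq m p_0\) by writing \(\E X\indp{X>c}=m-\E X\indp{X\leq c}\) and bounding \(\E X\indp{X\leq c}\leq c\,\P(X\leq c)\leq m\,\P(X\leq c)\). Your comonotonicity argument via an independent copy \(X'\) is in fact the original idea of \cite{dlP:opt} that the paper deliberately replaces by a self-contained computation; it is valid, though it tacitly requires the probability space to support an independent copy of \(X\). Your parenthetical alternative, \((m-c)p_0\leq m-c=\E(X-c)\leq\E(X-c)^+\) when \(m>c\), is the most economical of the three and needs only \(p_0\leq 1\) and \((X-c)\leq(X-c)^+\).
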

The proof of \eqref{eq:dlP} in \cite{dlP:opt} is very elegant, the
main idea being the introduction of an independent copy of the random
variable $X$.  Here we give an alternative, entirely elementary proof.
\begin{proof}
  Let us start with the lower bound. One can assume, without loss of
  generality, that $m \geq c$, otherwise there is nothing to prove.
  Since
  \[
    \E(X-c)^+ = \E(X-c)\indp{X>c}
    = \E X\indp{X>c} - c\P(X>c),
  \]
  it suffices to show that \(\E X \indp{X>c} \geq m \P(X>c)\).
  To this purpose, note that
  \[
    \E X \indp{X>c} = \E X - \E X \indp{X \leq c},
  \]
  where, thanks to the assumption \(m \geq c\),
  \[
    \E X \indp{X \leq c} \leq \E c \indp{X \leq c} = c \P(X \leq c)
    \leq m \P(X \leq c),
  \]
  hence
  \[
    \E X \indp{X>c} \geq m - m\P(X \leq c) = m \P(X>c).
  \]
  To prove the upper bound, note that
  \[
    \E(X-c)\indp{X>c} - (m-c)p_0
    = \E\bigl( X\indp{X>c} - m\indp{X>c} \bigr),
  \]
  hence, adding and subtracting \(\E Xp_0 = mp_0\) on the right-hand side,
  Cauchy-Schwarz's inequality yields
  \begin{align*}
    \E(X-c)^+ - (m-c)p_0
    &= \E(X-m)(\indp{X>c}-p_0)\\
    &\leq \sigma \norm[\big]{\indp{X>c}-p_0}_2\\
    &= \sigma (p_0-p_0^2)^{1/2},
  \end{align*}
  thus completing the proof.
\end{proof}

\begin{rmk}
  Even if \(m < c\), the inequality \(\E X \indp{X>c} \geq m \P(X>c)\)
  is still true. In fact,
  \[
    m \P(X>c) \leq c \P(X>c) = \E c \indp{X>c} \leq \E X \indp{X>c}.
  \]
\end{rmk}

Theorem \ref{thm:dlP} implies useful one-sided Chebyshev-like bounds.
\begin{coroll}
  \label{cor:Xpc}
  Let \(X \in \cX\) and \(c \in \erre\). The following holds:
  \begin{itemize}
  \item[(i)] if \(c \geq 0\), then
    \(\displaystyle \P(X \leq c) \geq \P(X < c) \geq \frac{c^2}{1+c^2}\);
  \item[(ii)] if \(c<0\), then
    \(\displaystyle \P(X \leq c) \leq \frac{1}{1+c^2}\).
  \end{itemize}
\end{coroll}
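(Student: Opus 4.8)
The plan is to deduce both bounds directly from Theorem~\ref{thm:dlP} (applied with $m=0$, $\sigma=1$), the only extra ingredient being the trivial pointwise estimate $(Z-a)^+ \geq Z-a$.

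I would treat part~(ii) first. Let $c<0$ and set $p := \P(X>c)$. Since $X \in \cX \subset L^1$, one has $\E(X-c)^+ \geq \E(X-c) = -c$, and combining this with the upper bound in~\eqref{eq:dlP} gives $-c \leq -cp + (p-p^2)^{1/2}$, that is, $(-c)(1-p) \leq \bigl(p(1-p)\bigr)^{1/2}$. If $p=1$ then $\P(X \leq c)=0$ and there is nothing to prove; if $p<1$, both sides are nonnegative and $1-p>0$, so squaring and dividing by $1-p$ yields $c^2(1-p) \leq p$, hence $p \geq c^2/(1+c^2)$, and therefore $\P(X \leq c) = 1-p \leq 1/(1+c^2)$.

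For part~(i), let $c \geq 0$; the case $c=0$ being trivial, assume $c>0$. Applying part~(ii), already proved, to the random variable $-X \in \cX$ and the strictly negative strike $-c$, one obtains $\P(X \geq c) = \P(-X \leq -c) \leq 1/(1+c^2)$, whence $\P(X<c) = 1-\P(X\geq c) \geq c^2/(1+c^2)$; the inequality $\P(X\leq c) \geq \P(X<c)$ is immediate.

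I do not expect a genuine obstacle here. The only points requiring a little care are that the \emph{lower} bound in~\eqref{eq:dlP} is vacuous in the relevant range (for $c<0$ it reads $-cp \leq \E(X-c)^+$ with $-cp\geq 0$, carrying no information when paired with the upper bound), so it must be replaced by the elementary inequality $(X-c)^+ \geq X-c$; and the degenerate case $p=1$ has to be disposed of before squaring. It is worth noting that equality holds throughout at the two-point laws of Lemma~\ref{lm:2p}, which is exactly why the thresholds $c^2/(1+c^2)$ and $1/(1+c^2)$ coincide with the ones appearing there; this also shows that the corollary is sharp.
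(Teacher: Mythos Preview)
Your proof is correct and close in spirit to the paper's, but with a small twist worth noting. Both arguments compare the upper bound in \eqref{eq:dlP} against a trivial lower bound for $\E(X-c)^+$ and then exploit the symmetry $X \mapsto -X$; the difference is which half is handled directly. The paper proves (i) first, pairing the lower bound $\E(X-c)^+ \geq 0$ with the upper bound in \eqref{eq:dlP} applied with $p_1 := \P(X \geq c)$ in place of $p_0$ --- which requires revisiting the proof of Theorem~\ref{thm:dlP} to justify the substitution --- and then deduces (ii) by symmetry. You instead prove (ii) first, pairing the lower bound $\E(X-c)^+ \geq \E(X-c) = -c$ with the upper bound in \eqref{eq:dlP} exactly as stated (with $p_0$), and then deduce (i) by symmetry; the passage $\P(-X \leq -c) = \P(X \geq c)$ automatically delivers the bound on $\P(X<c)$ without any tinkering. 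Your route is marginally cleaner in that it uses Theorem~\ref{thm:dlP} as a black box rather than reopening its proof.
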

\begin{proof}
  The proof of Theorem \ref{thm:dlP} remains valid with
  \(p_1 := \P(X \geq c)\) in place of \(p_0\), hence, as the
  right-hand side of \eqref{eq:dlP} must be positive,
  \[
    \sqrt{p_1(1-p_1)} \geq cp_1.
  \]
  If \(c \geq 0\), squaring both sides yields a linear inequality that
  is satisfied if and only if \(p_1 \leq 1/(1+c^2)\). This proves (i).
  \smallskip\par\noindent
  (ii) If \(c<0\),
  \[
    \P(X \leq c) = \P(-X \geq -c) = 1 - \P(-X < -c).
  \]
  Since \(-X \in \cX\) and \(-c>0\), (i) implies that
  \[
    \P(-X < -c) \geq \frac{c^2}{1+c^2},
  \]
  hence
  \[
    \P(X \leq c) \leq 1 - \frac{c^2}{1+c^2} = \frac{1}{1+c^2}.
    \qedhere
  \]
\end{proof}

\begin{rmk}
  Let \(X \in \cX\) and \(c \in \erre_+\). By a reasoning entirely
  analogous to the proof of (ii) in the previous corollary, both
  \(\P(X > c)\) and \(\P(X < -c)\) are bounded above by \(1/(1+c^2)\),
  hence \(\P(\abs{X}>c) \leq 2/(1+c^2)\), which is sharper than
  Chebyshev's inequality \(\P(\abs{X}>c) \leq 1/c^2\) if \(c<1\).
\end{rmk}


\section{Scarf-Lo bound}
The following estimate is obtained in \cite{Scarf}.
\begin{thm}[Scarf]
  \label{thm:Scarf}
  Let \(c,m,\sigma\) be strictly positive real numbers. The infimum of
  the function \(X \mapsto \E(X \wedge c)\) on the set
  \(\cX_{m,\sigma}^0\) is attained, i.e. it is a minimum, and is given
  by
  \[
    \min_{X \in \cX_{m,\sigma}^0} \E(X \wedge c) =
    \begin{cases}
      \displaystyle \frac{m^2}{m^2+\sigma^2}c,
      & \text{if } \displaystyle c \leq \frac{m^2+\sigma^2}{2m},\\[10pt]
      \displaystyle \frac{c+m}{2} - \frac12 \bigl( (c-m)^2 + \sigma^2
      \bigr)^{1/2}, & \text{if } \displaystyle c \geq
      \frac{m^2+\sigma^2}{2m}.
    \end{cases}
  \]
\end{thm}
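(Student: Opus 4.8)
The strategy is to reduce the constrained minimization over $\cX_{m,\sigma}^0$ to a one-variable optimization by stratifying the feasible set according to the value of $\P(X \le c)$, and then to read off the infimum from the de la Pe\~na--Ibragimov--Jordan bound of Theorem~\ref{thm:dlP}. First I would rewrite the objective: by Lemma~\ref{lm:latt}(iii), $\E(X\wedge c) = \E X - \E(X-c)^+ = m - \E(X-c)^+$, so minimizing $\E(X\wedge c)$ is the same as \emph{maximizing} $\E(X-c)^+$ over $\cX_{m,\sigma}^0$. By the affine rescaling \eqref{eq:aff}, i.e. $X \mapsto (X-m)/\sigma \in \cX^{-m/\sigma}$, it suffices to treat the normalized problem: maximize $\E(Z-\gamma)^+$ over $Z \in \cX^{-m/\sigma}$, where $\gamma := (c-m)/\sigma$, and then undo the scaling at the end.

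**Using the DIJ bound as the one-variable reduction.** For fixed $p_0 = \P(Z > \gamma)$, the upper bound in \eqref{eq:dlP} gives (with normalized mean $0$, variance $1$, threshold $\gamma$) $\E(Z-\gamma)^+ \le -\gamma p_0 + (p_0 - p_0^2)^{1/2} =: \phi(p_0)$, and moreover this bound is attained by a two-point random variable — this is exactly the sharpness half of Theorem~\ref{thm:dlP}, and Lemma~\ref{lm:2p} tells us precisely which $p_0$ are achievable by a two-point law in $\cX^{-m/\sigma}$ with $x \le \gamma$ appropriately (when the positivity constraint is imposed, i.e.\ $x \ge -m/\sigma$, this caps the range of admissible $p$). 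So the constrained maximum of $\E(Z-\gamma)^+$ equals $\sup \{ \phi(p) : p \in I \}$, where $I \subseteq \mathopen]0,1\mathclose[$ is the interval of parameters $p$ corresponding to two-point laws in $\cX^{-m/\sigma}$ satisfying the positivity constraint and having $x \le \gamma < y$. Computing $I$ from \eqref{eq:xy}: positivity $x = -((1-p)/p)^{1/2} \ge -m/\sigma$ forces $p \ge \sigma^2/(m^2+\sigma^2)$; this is the source of the threshold. One checks that at $p = \sigma^2/(m^2+\sigma^2)$ the lower endpoint $x$ equals $-m/\sigma$, meaning the two-point optimizer puts mass at the boundary point $0$ after unscaling.

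**The one-variable calculus and the case split.** It remains to maximize $\phi(p) = -\gamma p + \sqrt{p - p^2}$ over $p \in I$. Differentiating, $\phi'(p) = -\gamma + \frac{1-2p}{2\sqrt{p-p^2}}$, which vanishes at a unique $p^* $ with $1 - 2p^* = 2\gamma\sqrt{p^*-p^{*2}}$; solving the resulting quadratic gives $p^* = \frac12\bigl(1 - \gamma/\sqrt{1+\gamma^2}\bigr)$ and $\phi(p^*) = \frac12\bigl(\sqrt{1+\gamma^2} - \gamma\bigr)$. The dichotomy in Scarf's formula then arises purely from whether this unconstrained optimizer $p^*$ lies in $I$: if $p^* \ge \sigma^2/(m^2+\sigma^2)$ (which, after substituting $\gamma = (c-m)/\sigma$ and simplifying, is equivalent to $c \ge (m^2+\sigma^2)/(2m)$), the maximum is $\phi(p^*)$; otherwise $\phi$ is decreasing on $I$ past its unconstrained peak... actually $\phi$ is unimodal, so when $p^* < \min I$ the max over $I$ is at the left endpoint $p = \sigma^2/(m^2+\sigma^2)$, giving $\phi = \sigma m/(m^2+\sigma^2) \cdot (\text{something}) $. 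Translating back via $\E(X\wedge c) = m - \sigma\,\phi(p_{\max})$ and simplifying should yield the two branches $\frac{m^2}{m^2+\sigma^2}c$ and $\frac{c+m}{2} - \frac12\sqrt{(c-m)^2+\sigma^2}$ respectively.

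**Main obstacle.** The conceptual content is routine once set up; the delicate points are (a) justifying rigorously that the \emph{constrained} supremum is attained within the family of two-point laws — i.e.\ that imposing $X \ge 0$ does not force us strictly below $\sup_{p \in I}\phi(p)$, which needs the attainability clause of Theorem~\ref{thm:dlP} together with the observation that the extremal two-point law for a given $p_0$ can be taken to have its lower atom exactly at the constraint-feasible location; and (b) the algebraic verification that the threshold $p^* = \sigma^2/(m^2+\sigma^2)$ in $p$-space corresponds to $c = (m^2+\sigma^2)/(2m)$, and that the two expressions for $\phi$ at the boundary agree (continuity of the bound across the threshold). I expect (a) to be the real work: one must argue that for $p$ in the feasible interval the value $\phi(p)$ is not merely an upper bound but is \emph{achieved} by an admissible (positive) two-point variable, and separately that no non-two-point, possibly-positive $X$ can beat $\sup_{p\in I}\phi(p)$ — the latter because \eqref{eq:dlP} bounds $\E(X-c)^+$ by $\phi(\P(X>c))$ for \emph{every} $X$, positive or not, and the positivity constraint only restricts which $\P(X>c)$ are attainable, hence only shrinks the feasible $p$-interval to $I$.
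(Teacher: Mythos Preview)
Your overall strategy---stratify by $p=\P(Z\le\gamma)$, apply the de~la~Pe\~na--Ibragimov--Jordan bound on each stratum, and then optimize the resulting one-variable function---is exactly the paper's. The attainability direction is also fine: for every $p$ in the interval $I$ on which the two-point law of parameter $p$ is both feasible (lower atom $\ge -m/\sigma$) and straddles $\gamma$, that two-point law realises the DIJ upper bound. The gap is in your matching \emph{upper} bound for $\sup_{Z}\E(Z-\gamma)^+$, specifically the sentence ``the positivity constraint only restricts which $\P(X>c)$ are attainable, hence only shrinks the feasible $p$-interval to $I$.'' This is false. The set $J$ of values $p=\P(Z\le\gamma)$ realised by some $Z\in\cX^{-m/\sigma}$ is strictly larger than $I$. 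Normalising to $\sigma=1$, $m=1$, $\gamma=-1/2$ (so $p_m:=1/(1+m^2)=1/2$ and $I=[1/2,\,4/5]$), the three-point law $\P(Z=-1)=3/10$, $\P(Z=0)=4/7$, $\P(Z=7/3)=9/70$ belongs to $\cX^{-1}$ yet has $\P(Z\le\gamma)=3/10<p_m$. In the constrained regime $p_\ast<p_m$, and constructions of this kind place $p_\ast$ itself in $J$; your chain of inequalities then delivers only the \emph{unconstrained} bound $\inf_{Z}\E(Z\wedge\gamma)\ge L_\gamma(p_\ast)$, strictly smaller than the correct value $L_\gamma(p_m)$.

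The paper closes this gap with a second, elementary bound that invokes $Z\ge-m$ directly rather than through the value of $p$: for any such $Z$,
\[
\E(Z\wedge\gamma)\ \ge\ -m\,\P(Z\le\gamma)+\gamma\,\P(Z>\gamma)\ =\ \gamma-(m+\gamma)p.
\]
A one-line computation shows that this linear lower bound coincides with $L_\gamma(p)$ at $p=p_m$ and is the sharper of the two precisely when $p\le p_m$. For $p>p_m>p_\ast$ one uses the DIJ bound $\E(Z\wedge\gamma)\ge L_\gamma(p)$ together with the monotonicity of $L_\gamma$ on $[p_\ast,1\mathclose[$ to get $L_\gamma(p)\ge L_\gamma(p_m)$. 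Either way $\E(Z\wedge\gamma)\ge L_\gamma(p_m)$, matching the value attained by the feasible two-point law at $p_m$. So the missing ingredient is elementary, but it is genuinely needed and does not follow from the interval-shrinking claim.
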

Note that the constraint \(X \geq 0\) is dictated by the structure of
the practical inventory problem considered by Scarf. It is not needed,
however, to avoid the infimum to be minus infinity. In fact,
\[
  \abs{\E(X \wedge c)} \leq \E\abs{X \wedge c} \leq \E\abs{X} +
  \abs{c} \leq \bigl(\E X^2 \bigr)^{1/2} + \abs{c},
\]
where
\[
  \bigl(\E X^2 \bigr)^{1/2} = \norm{X}_2 = \norm{X-m+m}_2 \leq \sigma
  + \abs{m},
\]
which implies
\[
  \inf_{X \in \cX_{m,\sigma}} \E(X \wedge c) \geq -(\sigma + \abs{m} +
  \abs{c}).
\]

As observed in \cite{Lo87}, identity (iii) of Lemma \ref{lm:latt}
immediately yields the
\begin{coroll}[Lo]
  Let \(c,m,\sigma\) be strictly positive real numbers. The supremum
  of the function \(X \mapsto \E(X - c)^+\) on the set
  \(\cX_{m,\sigma}^0\) is attained, i.e. it is a maximum, and is given
  by
  \[
    \max_{X \in \cX_{m,\sigma}^0} \E(X - c)^+ =
    \begin{cases}
      \displaystyle m - \frac{m^2}{m^2+\sigma^2}c,
      & \text{if } \displaystyle c \leq \frac{m^2+\sigma^2}{2m},\\[10pt]
      \displaystyle \frac{m-c}{2} + \frac12 \bigl( (m-c)^2 + \sigma^2
      \bigr)^{1/2}, & \text{if } \displaystyle c \geq
      \frac{m^2+\sigma^2}{2m}.
    \end{cases}
  \]
\end{coroll}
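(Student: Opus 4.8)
The plan is to read this off from Theorem~\ref{thm:Scarf} using only the pointwise identity in Lemma~\ref{lm:latt}(iii), exactly as indicated in the text preceding the statement. First I would fix $c,m,\sigma>0$ and an arbitrary $X \in \cX_{m,\sigma}^0$, and apply Lemma~\ref{lm:latt}(iii) pointwise (with $a=X(\omega)$, $b=c$) to get $(X-c)^+ = X - (X\wedge c)$ almost surely. Since $\E(X\wedge c)$ is finite --- this is precisely the estimate recorded immediately after Theorem~\ref{thm:Scarf} --- and $\E X = m$, taking expectations gives the key identity
\[
  \E(X-c)^+ = m - \E(X\wedge c), \qquad X \in \cX_{m,\sigma}^0,
\]
with both sides finite.

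Next I would take the supremum over $\cX_{m,\sigma}^0$. Because $m$ is a constant independent of $X$, this turns the identity above into
\[
  \sup_{X \in \cX_{m,\sigma}^0} \E(X-c)^+ = m - \inf_{X \in \cX_{m,\sigma}^0} \E(X\wedge c),
\]
and, crucially, $X^\ast$ realises the supremum on the left precisely when it realises the infimum on the right. Theorem~\ref{thm:Scarf} asserts that the latter infimum is attained (at a two-point distributed random variable), so the former supremum is a maximum, attained by the same optimizer; this settles the attainment claim.

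Finally I would substitute the two-case value furnished by Theorem~\ref{thm:Scarf}. On the region $\{c \leq (m^2+\sigma^2)/(2m)\}$ this gives $m - \tfrac{m^2}{m^2+\sigma^2}c$ directly; on the region $\{c \geq (m^2+\sigma^2)/(2m)\}$ it gives
\[
  m - \frac{c+m}{2} + \frac12\bigl((c-m)^2+\sigma^2\bigr)^{1/2}
  = \frac{m-c}{2} + \frac12\bigl((m-c)^2+\sigma^2\bigr)^{1/2},
\]
using $(c-m)^2=(m-c)^2$, and the threshold separating the two regimes is manifestly unchanged. I do not expect any genuine obstacle here: the only step deserving a word of justification is the passage from the pointwise identity to $\E(X-c)^+ = m - \E(X\wedge c)$, which relies on the finiteness of $\E(X\wedge c)$ already established in the excerpt; everything else is bookkeeping.
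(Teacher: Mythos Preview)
Your proposal is correct and follows precisely the approach the paper indicates: the paper does not even spell out a proof, merely noting that identity~(iii) of Lemma~\ref{lm:latt} immediately yields the corollary from Theorem~\ref{thm:Scarf}. Your write-up supplies exactly the details that the paper leaves implicit, including the justification of finiteness and the arithmetic in each regime.
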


The remainder of this section is dedicated to showing that Theorem
\ref{thm:Scarf}, hence also its corollary, are a consequence of
Theorem \ref{thm:dlP}. We shall argue by a sequence of elementary
lemmas and propositions. The first is a reduction step that, in spite
of its simplicity, considerably reduces the burden of symbolic
calculations. Throughout the section we assume that
\(c,m,\sigma \in \erre\), with \(\sigma>0\). Further constraints (that
do not imply any loss of generality) will be introduced as needed.
\begin{lemma}
  \label{lm:redu}
  Let
  \[
    \widetilde{c} := \frac{c-m}{\sigma}.
  \]
  Then
  \[
    \inf_{X \in \cX_{m,\sigma}^0} \E(X \wedge c) =
    m + \sigma \inf_{X \in \cX^{-m/\sigma}} \E(X \wedge \widetilde{c}).
  \]
\end{lemma}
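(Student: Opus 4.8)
The plan is to reduce the minimization over $\cX_{m,\sigma}^0$ to one over the standardized set $\cX^{-m/\sigma}$ by the affine substitution $Y := (X-m)/\sigma$, transporting the functional $X \mapsto \E(X \wedge c)$ along the way by means of the lattice identities of Lemma~\ref{lm:latt}. There is essentially no obstacle here; the only point requiring a little care is the correct translation of the positivity constraint, which is what forces the reduced problem to live on $\cX^{-m/\sigma}$ rather than on the whole unit sphere $\cX$.

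First I would record that $X \mapsto Y := (X-m)/\sigma$ is a bijection from $\cX_{m,\sigma}^0$ onto $\cX^{-m/\sigma}$: indeed $X \in \cX_{m,\sigma}$ precisely when $Y \in \cX$, while $X \geq 0$ is equivalent to $Y \geq -m/\sigma$, and jointly this is exactly the content of \eqref{eq:aff} applied with $\alpha = -m/\sigma$, namely $m + \sigma\,\cX^{-m/\sigma} = \cX_{m,\sigma}^{0}$. Next, for such a pair $X = m + \sigma Y$ I would rewrite the integrand: by part~(i) of Lemma~\ref{lm:latt} (with the additive constant $m$) and then part~(ii) (with the nonnegative factor $\sigma$),
\[
  X \wedge c = (m + \sigma Y) \wedge c = m + \bigl( \sigma Y \wedge (c-m) \bigr)
  = m + \sigma \Bigl( Y \wedge \frac{c-m}{\sigma} \Bigr) = m + \sigma\,(Y \wedge \widetilde{c}).
\]
Taking expectations gives $\E(X \wedge c) = m + \sigma\,\E(Y \wedge \widetilde{c})$ for every $X \in \cX_{m,\sigma}^0$.

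Finally, since $\sigma > 0$ and the correspondence $X \leftrightarrow Y$ is a bijection between $\cX_{m,\sigma}^0$ and $\cX^{-m/\sigma}$, I would take the infimum on both sides: the left-hand side over $X \in \cX_{m,\sigma}^0$ and the right-hand side over the corresponding $Y \in \cX^{-m/\sigma}$, using that $t \mapsto m + \sigma t$ is increasing to pull $m + \sigma(\,\cdot\,)$ through the infimum. This yields
\[
  \inf_{X \in \cX_{m,\sigma}^0} \E(X \wedge c) = m + \sigma \inf_{Y \in \cX^{-m/\sigma}} \E(Y \wedge \widetilde{c}),
\]
which is the claimed identity.
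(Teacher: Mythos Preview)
Your proof is correct and follows essentially the same route as the paper: apply the affine bijection \(X = m + \sigma Y\) furnished by \eqref{eq:aff}, use the lattice identities of Lemma~\ref{lm:latt} to rewrite \((m+\sigma Y)\wedge c = m + \sigma(Y\wedge\widetilde{c})\), take expectations, and pass to the infimum. You are slightly more explicit than the paper about invoking parts~(i) and~(ii) of Lemma~\ref{lm:latt} separately and about why the affine map commutes with the infimum, but the argument is the same.
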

\begin{proof}
  Since \(\cX^0_{m,\sigma} = m + \sigma \cX^{-m/\sigma}\) by
  \eqref{eq:aff}, one has
  \[
    \inf_{X \in \cX_{m,\sigma}^0} \E(X \wedge c) =
    \inf_{Y \in \cX^{-m/\sigma}} \E( (m+\sigma Y) \wedge c),
  \]
  where, by Lemma \ref{lm:latt},
  \[
    \E( (m+\sigma Y) \wedge c) = m + \sigma \E\biggl(
    Y \wedge \frac{c-m}{\sigma} \biggr),
  \]
  which immediately yields the conclusion.
\end{proof}

The lemma implies that it suffices to study the problem of minimizing
the function \(X \mapsto \E(X \wedge c)\) over the set \(\cX^{-m}\).
We shall first study the minimization problem without the
lower-boundedness constraint, i.e. on \(\cX\) rather than on
\(\cX^{-m}\). 
We shall need some more notation: the subset of \(\cX_{m,\sigma}\)
such that \(\P(X \leq c) = p\) is denoted by
\(\cX_{m,\sigma}(p;c)\). Note that, in view of Corollary
\ref{cor:Xpc}, these sets are non-empty only for certain combinations
of the parameters \(p\) and \(c\).
Let \(L_c \colon \mathopen]0,1\mathclose[ \mapsto \erre\) be the
function defined by
\[
  L_c\colon p \longmapsto - (p-p^2)^{1/2} + c(1-p).
\]
\begin{lemma}
  \label{lm:iL}
  One has
  \[
    \inf_{X \in \cX(c;p)} \E(X \wedge c) \geq L_c(p)
  \]
\end{lemma}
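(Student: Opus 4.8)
The idea is to rewrite $\E(X\wedge c)$ in terms of $\E(X-c)^+$ using identity (iii) of Lemma~\ref{lm:latt} and then apply the lower bound from Theorem~\ref{thm:dlP}, which does not depend on positivity. Concretely, for $X\in\cX(c;p)$ we have $\E X=0$, $\operatorname{Var}(X)=1$, and $\P(X\le c)=p$, so $p_0=\P(X>c)=1-p$. By Lemma~\ref{lm:latt}(iii),
\[
  \E(X\wedge c)=\E X-\E(X-c)^+=-\E(X-c)^+.
\]
Now I would invoke the upper bound in \eqref{eq:dlP} with $m=0$, $\sigma=1$, $p_0=1-p$:
\[
  \E(X-c)^+\le -c(1-p)+\bigl((1-p)-(1-p)^2\bigr)^{1/2}=-c(1-p)+(p-p^2)^{1/2},
\]
since $(1-p)-(1-p)^2=(1-p)p=p-p^2$. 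Combining the two displays gives
\[
  \E(X\wedge c)=-\E(X-c)^+\ge c(1-p)-(p-p^2)^{1/2}=L_c(p),
\]
which is exactly the claim.

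The only genuinely delicate point is bookkeeping: one must be careful that Theorem~\ref{thm:dlP} is stated with $p_0=\P(X>c)$ (a strict inequality), whereas $\cX(c;p)$ is defined via $\P(X\le c)=p$, so that indeed $\P(X>c)=1-\P(X\le c)=1-p$ with no ambiguity from atoms at $c$. Everything else is the elementary algebraic simplification $(1-p)-(1-p)^2=p-p^2$ and a sign flip. No case distinction on the sign of $c$ is needed here, nor is the two-point structure invoked; that refinement is presumably deferred to a later lemma that shows $L_c(p)$ is actually attained on $\cX(c;p)$. Thus the proof is essentially a two-line consequence of the already-proved upper bound in Theorem~\ref{thm:dlP} together with Lemma~\ref{lm:latt}(iii).
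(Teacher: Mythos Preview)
Your proof is correct and follows exactly the same route as the paper: use Lemma~\ref{lm:latt}(iii) together with \(\E X=0\) to get \(\E(X\wedge c)=-\E(X-c)^+\), then apply the upper bound of Theorem~\ref{thm:dlP} with \(m=0\), \(\sigma=1\), \(p_0=1-p\). The only slip is in your opening sentence, where you write ``apply the lower bound from Theorem~\ref{thm:dlP}'' but (correctly) invoke the \emph{upper} bound two lines later.
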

\begin{proof}
  Lemma \ref{lm:latt}(iii) implies
  \[
    \E (X-c)^+ = -\E(X \wedge c)
  \]
  for any \(X\) with mean zero, hence, by Theorem \ref{thm:dlP},
  \[
    \inf_{X \in \cX(c;p)} \E(X \wedge c)
    = - \sup_{X \in \cX(c;p)} \E (X-c)^+
    \geq - (p-p^2)^{1/2} + c(1-p).
    \qedhere
  \]
\end{proof}

We are now going to show that the infimum in Lemma \ref{lm:iL} is
achieved, and that the minimizer is a two-point distributed random
variable. We shall only consider, without loss of generality, those
values of \(p\) such that \(\cX(p;c)\) is non-empty, that is, by
Corollary \ref{cor:Xpc}, setting
\[
\Pi_c :=
\begin{cases}
  \displaystyle \Bigl] 0 , \frac{1}{1+c^2} \Bigr],
  &\quad \text{if } c<0,\\[6pt]
  \displaystyle \Bigl[ \frac{c^2}{1+c^2} , 1 \Bigr[,
  &\quad \text{if } c \geq 0,
\end{cases}
\]
to \(p \in \Pi_c\).
\begin{lemma}
  \label{lm:45}
  Let \(p \in \Pi_c\) and \(X_0 \in \cX\) be the two-point
  distributed random variable with parameter \(p\). Then
  \[
    \E(X_0 \wedge c) = L_c(p),
  \]
  hence
  \[
    \inf_{X \in \cX(c;p)} \E(X \wedge c)
    = \min_{X \in \cX(c;p)} \E(X \wedge c)
    = \E(X_0 \wedge c) = L_c(p).
  \]
\end{lemma}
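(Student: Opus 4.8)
The plan is to compute $\E(X_0 \wedge c)$ directly from the explicit description \eqref{eq:xy} of the two-point random variable with parameter $p$, and to check that the resulting expression equals $L_c(p) = -(p-p^2)^{1/2} + c(1-p)$. The first task is to determine, for $p \in \Pi_c$, which of the two atoms $x = -((1-p)/p)^{1/2}$ and $y = (p/(1-p))^{1/2}$ fall below $c$ and which above. This is exactly the content of Lemma \ref{lm:2p}: when $c \geq 0$, the condition $p \in \Pi_c$ (i.e.\ $p \geq c^2/(1+c^2)$) is equivalent to $x < c \leq y$; when $c < 0$, the condition $p \in \Pi_c$ (i.e.\ $p \leq 1/(1+c^2)$) is equivalent to $x \leq c < y$. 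In both cases we get $X_0 \wedge c = x$ on $\{X_0 = x\}$ and $X_0 \wedge c = c$ on $\{X_0 = y\}$, so that
\[
  \E(X_0 \wedge c) = px + c(1-p) = -p\Bigl(\frac{1-p}{p}\Bigr)^{1/2} + c(1-p) = -(p(1-p))^{1/2} + c(1-p),
\]
which is precisely $L_c(p)$. (One should note the harmless boundary cases: if $c \geq 0$ and $p = c^2/(1+c^2)$ then $y = c$ and $X_0 \wedge c = X_0$, but $y = c$ so the value $c(1-p)$ is still correct; similarly at the other endpoint.)

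Once the identity $\E(X_0 \wedge c) = L_c(p)$ is established, the chain of equalities in the statement follows immediately. Indeed $X_0 \in \cX$ and $\P(X_0 \leq c) = p$, so $X_0 \in \cX(c;p)$; hence the infimum over $\cX(c;p)$ is at most $\E(X_0 \wedge c) = L_c(p)$. On the other hand Lemma \ref{lm:iL} gives the reverse inequality $\inf_{X \in \cX(c;p)} \E(X \wedge c) \geq L_c(p)$. Therefore the infimum equals $L_c(p)$ and is attained at $X_0$, which is the assertion.

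The only genuine point requiring care — and thus the main (mild) obstacle — is the correct bookkeeping of which atom is truncated, i.e.\ invoking Lemma \ref{lm:2p} with the right sign of $c$ and handling the endpoint of $\Pi_c$ where one atom coincides with $c$; everything else is a one-line substitution from \eqref{eq:xy}. There is no analytic difficulty here, since the heavy lifting (the sharp upper bound on $\E(X-c)^+$) has already been done in Theorem \ref{thm:dlP} and recorded in Lemma \ref{lm:iL}.
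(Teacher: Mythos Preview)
Your proposal is correct and follows exactly the paper's approach: invoke Lemma~\ref{lm:2p} to see that \(x \leq c \leq y\) for \(p \in \Pi_c\), compute \(\E(X_0 \wedge c) = px + c(1-p) = L_c(p)\) from \eqref{eq:xy}, and combine \(X_0 \in \cX(c;p)\) with Lemma~\ref{lm:iL} to turn the infimum into a minimum. You have simply written out in full the ``elementary computation'' that the paper leaves implicit, and your remark on the boundary cases is a welcome clarification.
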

\begin{proof}
  Since \(p \in \Pi_c\), Lemma \ref{lm:2p} implies that the random
  variable \(X_0\), taking the values \(x\) and \(y\) as defined in
  \eqref{eq:xy}, is such that \(x \leq c \leq y\). In particular,
  \(\P(X_0 \leq c) = p\), i.e. \(X_0 \in \cX(p;c)\), and an elementary
  computation finally shows that \(\E(X_0 \wedge c) = L_c(p)\).
\end{proof}

The following result essentially shows that Theorem \ref{thm:dlP}
implies Theorem \ref{thm:Scarf} in the unconstrained case
(i.e. without assuming that the minimizer should be bounded from
below).
\begin{prop}
  One has
  \[
    \inf_{X \in \cX} \E(X \wedge c)
    = \inf_{p \in \Pi_c} L_c(p).
  \]
\end{prop}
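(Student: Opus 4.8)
The plan is to slice \(\cX\) into the level sets of the map \(X \mapsto \P(X \leq c)\) and to minimise \(\E(X \wedge c)\) on each slice separately by means of Lemma \ref{lm:45}. Concretely, I would first record the decomposition
\[
  \cX = \Bigl( \bigcup_{p \in \Pi_c} \cX(c;p) \Bigr) \cup \cX'',
  \qquad
  \cX'' := \{ X \in \cX : \P(X \leq c) \in \{0,1\} \},
\]
the crucial observation being that for \(p \in \mathopen]0,1\mathclose[ \setminus \Pi_c\) the level set \(\cX(c;p)\) is \emph{empty}: for \(c \geq 0\) Corollary \ref{cor:Xpc}(i) forces \(\P(X \leq c) \geq c^2/(1+c^2)\), and for \(c < 0\) Corollary \ref{cor:Xpc}(ii) forces \(\P(X \leq c) \leq 1/(1+c^2)\). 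Consequently
\[
  \inf_{X \in \cX} \E(X \wedge c)
  = \min\Bigl( \inf_{p \in \Pi_c} \inf_{X \in \cX(c;p)} \E(X \wedge c),\ \inf_{X \in \cX''} \E(X \wedge c) \Bigr),
\]
and Lemmas \ref{lm:iL} and \ref{lm:45} identify the first term with \(\inf_{p \in \Pi_c} L_c(p)\); note moreover that the two-point optimizers supplied by Lemma \ref{lm:45} are genuine elements of \(\cX\), so that \(\inf_{X \in \cX} \E(X \wedge c) \leq L_c(p)\) for every \(p \in \Pi_c\) as well, which takes care of the upper bound.

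It then remains to check that the degenerate slice \(\cX''\) cannot lower the infimum, i.e.\ that \(\inf_{X \in \cX''} \E(X \wedge c) \geq \inf_{p \in \Pi_c} L_c(p)\). Here I would argue directly: if \(\P(X \leq c) = 1\) then \(X \leq c\) a.s., so \(X \wedge c = X\) and \(\E(X \wedge c) = \E X = 0\); if instead \(\P(X \leq c) = 0\) then \(X > c\) a.s., so \(X \wedge c = c\) and \(\E(X \wedge c) = c\). Since \(\E X = 0\) and \(\operatorname{Var}(X) = 1\), the first situation can occur only for \(c > 0\) and the second only for \(c < 0\) (in particular \(\cX'' = \emptyset\) when \(c = 0\)). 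Finally, since \(L_c\) extends continuously to \([0,1]\) with \(L_c(0^+) = c\) and \(L_c(1^-) = 0\), and since \(1\) is a limit point of \(\Pi_c\) when \(c > 0\) while \(0\) is a limit point of \(\Pi_c\) when \(c < 0\), one gets \(\inf_{p \in \Pi_c} L_c(p) \leq 0\) for \(c > 0\) and \(\inf_{p \in \Pi_c} L_c(p) \leq c\) for \(c < 0\); in either case this is \(\leq \inf_{X \in \cX''} \E(X \wedge c)\). Combining the two displays gives \(\inf_{X \in \cX} \E(X \wedge c) = \inf_{p \in \Pi_c} L_c(p)\).

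The only genuine subtlety I anticipate is precisely this bookkeeping around the boundary of \(\Pi_c\): the ``obvious'' partition of \(\cX\) by the value of \(\P(X \leq c)\) accounts only for the non-degenerate random variables, and one must not overlook that \(\P(X \leq c)\) may equal \(0\) or \(1\) — values lying outside the half-open interval \(\Pi_c\) — so a separate, though entirely elementary, treatment of \(\cX''\) is needed. Everything else is immediate from Lemmas \ref{lm:iL} and \ref{lm:45} together with the fact that the extremal two-point laws already belong to \(\cX\).
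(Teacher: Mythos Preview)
Your approach is exactly the paper's: decompose \(\cX\) according to the value of \(p = \P(X \leq c)\) and invoke Lemma~\ref{lm:45} on each slice to identify the inner infimum with \(L_c(p)\). Your explicit treatment of the degenerate set \(\cX''\) is in fact more careful than the paper, which simply writes \(\cX = \bigcup_{p \in \Pi_c} \cX(p;c)\) and passes directly to the double infimum without addressing the endpoint cases \(p \in \{0,1\}\); your limit argument \(L_c(1^-)=0\), \(L_c(0^+)=c\) correctly disposes of them.
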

\begin{proof}
  The decomposition
  \[
    \cX = \bigcup_{p \in \Pi_c} \cX(p;c),
  \]
  implies (see, e.g., \cite[p.~III.11]{BBk:Ens})
  \[
    \inf_{X \in \cX} \E(X \wedge c)
    = \inf_{p \in \Pi_c}
      \inf_{X \in \cX(p;c)} \E(X \wedge c),
  \]
  so that Lemma \ref{lm:45} implies the claim.
\end{proof}

This clearly indicates that the next step should be to find the
minimum of the function \(L_c\).
\begin{lemma}
  \label{lm:mon}
  The function \(L_c\) satisfies the following properties:
  \begin{itemize}
  \item[(a)] it is decreasing on the interval
    \[
      \Bigl] 0, \frac12 + \frac12 \frac{c}{(1+c^2)^{1/2}}\Bigr];
    \]
  \item[(b)] it is increasing on the interval
    \[
      \Bigl[ \frac12 + \frac12 \frac{c}{(1+c^2)^{1/2}}, 1 \Bigr[;
    \]
  \item[(c)] admits a unique minimum point \(p_\ast\) defined by
    \[
      p_\ast := \frac12 + \frac12 \frac{c}{(1+c^2)^{1/2}}.
    \]
    Moreover, \(p_\ast\) belongs to \(\Pi_c\) and
    \[
      L_c(p_\ast) = \frac12 c - \frac12 (1+c^2)^{1/2}.
    \]
  \end{itemize}
\end{lemma}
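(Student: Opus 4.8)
The plan is to study $L_c$ by elementary calculus on the open interval $\mathopen]0,1\mathclose[$. Write $L_c(p) = -\phi(p) + c(1-p)$ with $\phi(p) := (p-p^2)^{1/2} = \sqrt{p(1-p)}$. First I would compute the derivative: since $\phi'(p) = (1-2p)/\bigl(2\sqrt{p(1-p)}\bigr)$, one gets
\[
  L_c'(p) = -\frac{1-2p}{2\sqrt{p(1-p)}} - c = \frac{2p-1}{2\sqrt{p(1-p)}} - c .
\]
The behaviour of $L_c$ is thus governed by the sign of $g(p) := (2p-1)/\bigl(2\sqrt{p(1-p)}\bigr) - c$. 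I would note that $p \mapsto (2p-1)/\sqrt{p(1-p)}$ is strictly increasing on $\mathopen]0,1\mathclose[$ (its numerator is increasing and positive for $p>1/2$, and for $p<1/2$ the ratio is negative with decreasing absolute value; alternatively, differentiate once more and check positivity), running from $-\infty$ to $+\infty$. Hence $g$ is strictly increasing from $-\infty$ to $+\infty$, so $L_c'$ has a unique zero $p_\ast$, is negative to its left and positive to its right; this already gives (a), (b), and the uniqueness of the minimum in (c), modulo identifying $p_\ast$.

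To identify $p_\ast$ I would solve $L_c'(p)=0$, i.e. $(2p-1) = 2c\sqrt{p(1-p)}$. Setting $t := 2p-1 \in \mathopen]-1,1\mathclose[$, one has $4p(1-p) = 1-t^2$, so the equation becomes $t = c\sqrt{1-t^2}$, whence (for the relevant branch, $t$ and $c$ of the same sign) $t^2 = c^2(1-t^2)$, giving $t^2 = c^2/(1+c^2)$ and $t = c/(1+c^2)^{1/2}$ — the sign being correct since $t$ and $c$ agree in sign. Therefore $p_\ast = \tfrac12 + \tfrac12\, c/(1+c^2)^{1/2}$, as claimed. Substituting back, $1-p_\ast = \tfrac12 - \tfrac12\, c/(1+c^2)^{1/2}$ and
\[
  p_\ast(1-p_\ast) = \frac14\Bigl(1 - \frac{c^2}{1+c^2}\Bigr) = \frac{1}{4(1+c^2)},
\]
so $\sqrt{p_\ast(1-p_\ast)} = \tfrac12 (1+c^2)^{-1/2}$, and then
\[
  L_c(p_\ast) = -\frac{1}{2(1+c^2)^{1/2}} + c\Bigl(\frac12 - \frac{c}{2(1+c^2)^{1/2}}\Bigr)
  = \frac{c}{2} - \frac{1+c^2}{2(1+c^2)^{1/2}} = \frac{c}{2} - \frac12 (1+c^2)^{1/2},
\]
which is the stated value.

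Finally I would check $p_\ast \in \Pi_c$. If $c \geq 0$, then $p_\ast \geq \tfrac12$, and the requirement is $p_\ast \geq c^2/(1+c^2)$; using $1-p_\ast = \tfrac12\bigl(1 - c/(1+c^2)^{1/2}\bigr)$, a direct comparison (clearing denominators, or noting $1 - p_\ast \le \tfrac1{2(1+c^2)^{1/2}} \le \tfrac1{1+c^2}$ for $c\ge0$ since $(1+c^2)^{1/2}\ge 1$) gives $p_\ast \geq c^2/(1+c^2)$, and $p_\ast < 1$ is clear. If $c<0$, by the symmetry $p_\ast(-c) = 1 - p_\ast(c)$ the same computation shows $p_\ast \leq 1/(1+c^2)$, and $p_\ast>0$ is clear. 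The one mildly delicate point — the only place a little care is needed — is the sign bookkeeping when squaring $t = c\sqrt{1-t^2}$: one must observe that a zero of $L_c'$ forces $2p-1$ and $c$ to have the same sign (from $(2p-1) = 2c\sqrt{p(1-p)}$ with $\sqrt{p(1-p)}>0$), so the positive square root is the correct branch and no spurious root is introduced.
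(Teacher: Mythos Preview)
Your approach is the same calculus-based route the paper takes: compute $L_c'$, locate its unique zero, and verify $p_\ast \in \Pi_c$. Your execution is in places more streamlined---you argue directly that $p \mapsto (2p-1)/\sqrt{p(1-p)}$ is strictly increasing from $-\infty$ to $+\infty$ and use the substitution $t=2p-1$ to solve $L_c'(p)=0$, whereas the paper splits into the four cases $c\ge 0$ or $c<0$, $p\le 1/2$ or $p>1/2$, and reduces to a quadratic in $p$; your symmetry $p_\ast(-c)=1-p_\ast(c)$ to dispatch $c<0$ is also neater than the paper's separate verification.

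There is, however, one slip in the check that $p_\ast \in \Pi_c$ for $c\ge 0$: the chain $1 - p_\ast \le \tfrac{1}{2(1+c^2)^{1/2}} \le \tfrac{1}{1+c^2}$ fails at the second inequality once $c>\sqrt{3}$, since that inequality is equivalent to $(1+c^2)^{1/2}\le 2$, not to $(1+c^2)^{1/2}\ge 1$ as you suggest. Your alternative of ``clearing denominators'' does go through and is essentially what the paper does: setting $x:=c/(1+c^2)^{1/2}\in[0,1\mathclose[$, the required inequality $p_\ast \ge c^2/(1+c^2)$ becomes $1+x\ge 2x^2$, which holds on $[0,1]$. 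With this correction the argument is complete.
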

\begin{proof}
  The function \(L_c\) is smooth and its derivative is the function
  \[
    L_c'\colon p \mapsto - \frac12 \bigl( p(1-p) \bigr)^{-1/2} (1-2p)
    - c.
  \]
  Therefore the function \(L_c\) is decreasing if and only if
  \begin{equation}
    \label{eq:segno}
    - \frac12 \frac{1-2p}{\bigl( p(1-p) \bigr)^{1/2}} \leq c,
  \end{equation}
  and increasing if and only if the sign \(\leq\) is replaced by
  \(\geq\).

  Let us first consider the case \(c \geq 0\): if \(p \leq 1/2\),
  inequality \eqref{eq:segno} is satisfied, hence \(L_c\) is
  decreasing on \(\mathopen]0,1/2\mathclose]\). If \(p>1/2\),
  inequality \eqref{eq:segno} is equivalent to
  \[
    \frac12 \frac{2p-1}{\bigl( p(1-p) \bigr)^{1/2}} \leq c,
  \]
  hence also, both sides being positive, to
  \[
    \frac{4p^2-4p+1}{p-p^2} \leq 4c^2,
  \]
  which is equivalent to
  \[
    4(1+c^2)p^2 - 4(1+c^2)p + 1 \leq 0.
  \]
  The zeros of the second-order polynomial (in \(p\)) on the left-hand
  side are
  \[
    \frac12 \pm \frac12 \frac{c}{(1+c^2)^{1/2}},
  \]
  hence the polynomial is negative on the interval
  \[
    \Bigl[ \frac12 - \frac12 \frac{c}{(1+c^2)^{1/2}}, \frac12 +
    \frac12 \frac{c}{(1+c^2)^{1/2}} \Bigr].
  \]
  We have thus shown that, if \(c \geq 0\), the function \(L_c\) is
  decreasing on the interval
  \[
    \Bigl] 0 , \frac12 + \frac12 \frac{c}{(1+c^2)^{1/2}} \Bigr]
  \]
  and increasing on the interval
  \[
    \Bigl[\frac12 + \frac12 \frac{c}{(1+c^2)^{1/2}},1 \Bigr[.
  \]
  Let us now consider the case \(c<0\): if \(p \geq 1/2\), inequality
  \eqref{eq:segno} is not satisfied, hence \(L_c\) is increasing on
  \(\mathopen[1/2,1\mathclose[\).  If \(p<1/2\), inequality
  \eqref{eq:segno} is equivalent to
  \[
    \frac{4p^2-4p+1}{p-p^2} \geq 4c^2,
  \]
  thus also to
  \[
    4(1+c^2)p^2 - 4(1+c^2)p + 1 \geq 0.
  \]
  Similarly as before, the roots of the second-order polynomial in
  \(p\) on the left-hand side are
  \[
    \frac12 \pm \frac12 \frac{\abs{c}}{(1+c^2)^{1/2}},
  \]
  hence the inequality is verified on the interval
  \[
    \Bigl[ 0, \frac12 - \frac12 \frac{\abs{c}}{(1+c^2)^{1/2}} \Bigr].
  \]
  We infer that the function \(L_c\) is decreasing on this interval
  and increasing on the interval
  \[
    \Bigl[ \frac12 - \frac12 \frac{\abs{c}}{(1+c^2)^{1/2}}, 1 \Bigr[.
  \]
  The claim then follows patching the two cases \(c<0\) and
  \(c \geq 0\) together. It only remains to show that
  \(p_\ast \in \Pi_c\): if \(c \geq 0\), this is equivalent to
  \[
    p_\ast = \frac12 + \frac12 \frac{c}{(1+c^2)^{1/2}}
    \geq \frac{c^2}{1+c^2}.
  \]
  Setting \(x := c/(1+c^2)^{1/2} \in [0,1\mathclose[\), this reduces
  to checking that \(1+x \geq 2x^2\), which is indeed the case if
  \(x \in [0,1]\).
  If \(c<0\), \(p_\ast\) belongs to \(\Pi_c\) if and only if
  \[
    p_\ast = \frac12 + \frac12 \frac{c}{(1+c^2)^{1/2}}
    = \frac12 - \frac12 \frac{\abs{c}}{(1+c^2)^{1/2}}
    \leq \frac{1}{1+c^2},
  \]
  that is, setting \(\langle c \rangle := (1+c^2)^{1/2}\) for
  convenience, if and only if
  \[
    \frac12 \frac{\abs{c}}{\langle c \rangle} \geq \frac12 -
    \frac{1}{\langle c \rangle^2},
  \]
  or, equivalently, if and only if
  \[
    \langle c \rangle \abs{c} \geq \langle c \rangle^2 - 2.
  \]
  This inequality is obviously satisfied if
  \(\langle c \rangle^2 - 2 = c^2-1 \leq 0\), i.e. if
  \(\abs{c} \leq 1\), which amounts to \(c \in [-1,0]\). If
  \(c \leq -1\), so that \(\langle c \rangle^2 - 2 \geq 0\), the
  inequality is equivalent to
  \[
    \langle c \rangle^2 c^2 \geq (c^2-1)^2,
  \]
  which simplifies to \(3c^2 \geq 1\). The inequality is then verified
  for every \(\abs{c} \geq 1/\sqrt{3} \vee 1\), i.e. for every
  \(c \leq -1\). This concludes the proof that \(p_\ast \in \Pi_c\) if
  \(c<0\), hence in general. The expression for \(L_c(p_\ast)\)
  follows by elementary algebra.
\end{proof}

We have thus solved the problem of of minimizing the function
\(X \mapsto \E(X \wedge c)\) on \(\cX\).
\begin{prop}
  Let \(p_\ast\) be defined as in Lemma \ref{lm:mon}, and
  \[
    x_\ast := - \biggl( \frac{1-p_*}{p_*} \biggr)^{1/2}, \qquad
    y_\ast := \biggl( \frac{p_*}{1-p_*} \biggr)^{1/2}.
  \]
  The two-point distributed random variable \(X_0\) with
  \[
  \P(X_0 = x_\ast) = p_\ast, \qquad \P(X_0 = y_\ast) = 1-p_\ast    
  \]
  is a minimizer of \(\inf_{X \in \cX} \E(X \wedge c)\), i.e.
  \[
    \inf_{X \in \cX} \E(X \wedge c) = L_c(p_\ast)
    = \E(X_0 \wedge c).
  \]
\end{prop}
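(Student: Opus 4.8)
The plan is simply to assemble the pieces already in place; this Proposition is the natural payoff of the preceding three results. By the preceding Proposition,
\[
  \inf_{X \in \cX} \E(X \wedge c) = \inf_{p \in \Pi_c} L_c(p),
\]
so the first step is to locate the infimum of $L_c$ over $\Pi_c$. This has been done in Lemma \ref{lm:mon}: the function $L_c$ attains its minimum over $\mathopen]0,1\mathclose[$ at the single point $p_\ast$, and $p_\ast$ was shown there to lie in $\Pi_c$. Hence $\inf_{p \in \Pi_c} L_c(p) = L_c(p_\ast)$, and this infimum is in fact a minimum, with the explicit value $L_c(p_\ast) = \frac12 c - \frac12 (1+c^2)^{1/2}$ recorded in Lemma \ref{lm:mon}(c).

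The second step is to identify $L_c(p_\ast)$ with $\E(X_0 \wedge c)$. Since $p_\ast \in \Pi_c$, Lemma \ref{lm:45} applies with $p = p_\ast$: the two-point distributed element of $\cX$ with parameter $p_\ast$ — which, by \eqref{eq:xy}, takes precisely the values $x_\ast$ and $y_\ast$ with probabilities $p_\ast$ and $1-p_\ast$, i.e. it is exactly the random variable $X_0$ of the statement — belongs to $\cX(p_\ast;c) \subset \cX$ and satisfies $\E(X_0 \wedge c) = L_c(p_\ast)$. Combining the two displays gives
\[
  \inf_{X \in \cX} \E(X \wedge c) = L_c(p_\ast) = \E(X_0 \wedge c),
\]
with $X_0 \in \cX$, so the infimum is attained, at $X_0$.

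There is no genuine obstacle here: the statement is a bookkeeping consequence of the preceding Proposition, of Lemma \ref{lm:mon}(c), and of Lemma \ref{lm:45}. The only point worth spelling out is the observation that the $X_0$ written down in the statement is literally the two-point variable associated to the parameter $p_\ast$ through \eqref{eq:xy}, so that Lemma \ref{lm:45} can be quoted verbatim; this is immediate from the definitions of $x_\ast$ and $y_\ast$. (If desired, one can also re-verify the closed form for $L_c(p_\ast)$, but that substitution has already been carried out in the proof of Lemma \ref{lm:mon}.)
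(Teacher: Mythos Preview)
Your proposal is correct and is precisely the assembly of the preceding results that the paper intends: the paper states this Proposition without an explicit proof, treating it as an immediate consequence of the previous Proposition, Lemma~\ref{lm:mon}, and Lemma~\ref{lm:45}, which is exactly what you spell out.
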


If the parameters of the problem are such that \(X_0\), as defined in
the previous proposition, is bounded below by \(-m\), the original
minimization problem is clearly solved. We shall assume, until further
notice, that \(m\geq 0\) and \(c>-m\). This comes at no loss of
generality, as \(\cX^{-m}\) is empty if \(m<0\), and the problem
degenerates if \(c \leq -m\), in the sense that \(\E(X \wedge c) = c\)
for every \(X \geq -m\).
\begin{coroll}
  Let \(X_0\) be defined as in the previous proposition. One has
  \[
    \inf_{X \in \cX^{-m}} \E(X \wedge c) = \E(X_0 \wedge c)
  \]
  if and only if
  \[
    c \geq \frac{1-m^2}{2m}.
  \]
\end{coroll}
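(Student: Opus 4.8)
The plan is to reduce the stated equivalence to a property of the unconstrained minimizer $X_0$ alone, and then, in the regime where $X_0$ leaves $\cX^{-m}$, to evaluate the constrained infimum explicitly.

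First I would note (taking $m>0$, as the formula requires) that since $y_\ast>0\geq -m$, one has $X_0\in\cX^{-m}$ if and only if $x_\ast\geq -m$, which by the expression for $x_\ast$ is equivalent to $(1-p_\ast)/p_\ast\leq m^2$, i.e.\ to $p_\ast\geq 1/(1+m^2)$. It then remains to turn this into a condition on $c$. The map $c\mapsto p_\ast(c)=\tfrac12+\tfrac12 c(1+c^2)^{-1/2}$ is continuous and strictly increasing (its derivative is $\tfrac12(1+c^2)^{-3/2}$), so $p_\ast(c)\geq 1/(1+m^2)$ holds exactly for $c\geq c_0$, where $c_0$ is the unique root of $p_\ast(c_0)=1/(1+m^2)$; rewriting this as $c_0(1+c_0^2)^{-1/2}=(1-m^2)/(1+m^2)$, squaring, and keeping the correct sign gives $c_0=(1-m^2)/2m$. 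This is a short symbolic computation, expected to be routine. The ``if'' direction is then immediate: if $c\geq(1-m^2)/2m$ then $X_0\in\cX^{-m}\subseteq\cX$, so by the previous proposition $\E(X_0\wedge c)=\inf_{X\in\cX}\E(X\wedge c)\leq\inf_{X\in\cX^{-m}}\E(X\wedge c)\leq\E(X_0\wedge c)$, and all three coincide.

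For the ``only if'' direction I would argue the contrapositive: if $c<(1-m^2)/2m$, i.e.\ $p_\ast<1/(1+m^2)$, then $\inf_{X\in\cX^{-m}}\E(X\wedge c)>\E(X_0\wedge c)=L_c(p_\ast)$. One is tempted to observe only that no $X\in\cX^{-m}$ attains $L_c(p_\ast)$ — indeed an equality analysis in the Cauchy--Schwarz step of the proof of Theorem \ref{thm:dlP} forces any minimizer over $\cX$ to be the two-point random variable with parameter $p_\ast$, namely $X_0\notin\cX^{-m}$ — but non-attainment does not by itself produce a strict gap. So I would instead identify the constrained infimum. Decompose $\cX^{-m}$ into the slices with $p=\P(X\leq c)$; for $X$ in a slice, use two lower bounds: the unconstrained one $\E(X\wedge c)\geq L_c(p)$ from Lemma \ref{lm:iL}, and the bound coming from $X\geq -m$,
\[
  \E(X\wedge c)=\E X\indp{X\leq c}+c\,\P(X>c)\geq -mp+c(1-p)=:R_c(p).
\]
Since $p_\ast<1/(1+m^2)$, Lemma \ref{lm:mon} gives that $L_c$ is increasing on $[1/(1+m^2),1)$, so for $p\geq 1/(1+m^2)$ the first bound yields $\E(X\wedge c)\geq L_c(1/(1+m^2))$; for $p<1/(1+m^2)$ the second bound, $R_c$ being strictly decreasing (its slope is $-(m+c)<0$ because $c>-m$), yields $\E(X\wedge c)>R_c(1/(1+m^2))$; and an elementary computation shows $R_c(1/(1+m^2))=L_c(1/(1+m^2))$ (both equal $m(cm-1)/(1+m^2)$). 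Hence $\inf_{X\in\cX^{-m}}\E(X\wedge c)\geq L_c(1/(1+m^2))$, and this is in fact an equality, since the two-point random variable with parameter $1/(1+m^2)$ has smaller value exactly $-m$, hence lies in $\cX^{-m}$, and lies in the slice $\P(X\leq c)=1/(1+m^2)$ because $1/(1+m^2)\in\Pi_c$; by Lemma \ref{lm:45} it attains $L_c(1/(1+m^2))$. (The membership $1/(1+m^2)\in\Pi_c$ is a short check: it amounts to $cm<1$ when $c\geq 0$ and to $|c|<m$ when $c<0$, both guaranteed by the standing hypotheses together with $c<(1-m^2)/2m$.) Finally $L_c(1/(1+m^2))>L_c(p_\ast)$ by the strict monotonicity of $L_c$ on $[p_\ast,1)$ (its derivative vanishing only at $p_\ast$), since $1/(1+m^2)>p_\ast$; so the asserted equality fails, as required.

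The main obstacle is precisely the passage just highlighted: ``$L_c(p_\ast)$ is not attained on $\cX^{-m}$'' does not imply ``the constrained infimum exceeds $L_c(p_\ast)$'' (the sphere $\cX^{-m}$ is closed but not compact in $L^2$, and $X\mapsto\E(X\wedge c)$ is merely Lipschitz), so one is forced to pin the constrained infimum down, which is where the auxiliary linear bound $R_c$ and the identity $R_c(1/(1+m^2))=L_c(1/(1+m^2))$ carry the argument. A minor secondary point is the bookkeeping verification that $1/(1+m^2)\in\Pi_c$ throughout the relevant parameter range.
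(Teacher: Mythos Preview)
Your argument is correct. The equivalence \(X_0\in\cX^{-m}\Longleftrightarrow c\geq(1-m^2)/(2m)\) and the ``if'' direction match the paper exactly (your monotonicity-of-\(p_\ast\) computation and the paper's direct algebraic inversion are equivalent). The paper's own proof of the corollary stops at that point and declares the full biconditional immediate; as you rightly observe, this leaves the ``only if'' direction unjustified in isolation, since \(X_0\notin\cX^{-m}\) does not by itself force a strict gap between \(\inf_{X\in\cX^{-m}}\E(X\wedge c)\) and \(L_c(p_\ast)\). The paper effectively defers that strict inequality to its subsequent proposition, whose proof is precisely the argument you supply here: the linear lower bound \(R_c(p)=-mp+c(1-p)\) valid for \(X\geq -m\), the identity \(R_c(p_m)=L_c(p_m)\) with \(p_m=1/(1+m^2)\), and the monotonicity of \(L_c\) on \([p_\ast,1\mathclose[\). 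So your route is not different from the paper's; it is the paper's route with the deferred step folded back into the corollary, making the biconditional self-contained at the cost of anticipating the next result.
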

\begin{proof}
  By definition of \(X_0\), the lower bound \(X_0 \geq -m \) holds if
  and only if \(p_\ast \geq 1/(1+m^2)\),
  i.e. if and only if
  \[
    \frac{1}{1+m^2} \leq \frac12 + \frac12 \frac{c}{(1+c^2)^{1/2}},
  \]
  or, equivalently,
  \[
    2 \frac{1}{1+m^2} - 1 \leq \frac{c}{(1+c^2)^{1/2}},
  \]
  where the left-hand side takes values in the interval
  \(\mathopen]-1,1\mathclose]\). Elementary algebra shows that the
  inequality
  \[
    \beta \leq \frac{c}{(1+c^2)^{1/2}}, \qquad \beta \in
    \mathopen]-1,1\mathclose],
  \]
  is verified if and only if
  \[
    c \geq \frac{\beta}{(1-\beta^2)^{1/2}}.
  \]
  Replacing \(\beta\) by \(2/(1+m^2)-1\) finally implies that
  \(X_0 \geq -m\) if and only if
  \[
    c \geq \frac{1-m^2}{2m},
  \]
  from which the claim follows immediately.
\end{proof}

In view of the corollary, we only need to consider the problem under
the condition
\[
  c < \frac{1-m^2}{2m}.
\]
Note that this implies \(c < 1/m\).

Let us start observing that, for any \(X \geq -m\),
\begin{align}
  \E(X \wedge c)
  &= \E\bigl( X \indp{X \leq c} + c\indp{X>c} \bigr)\nonumber\\
  \label{eq:mcP}
  &\geq -m \P(X \leq c) + c\P(X>c)\\
  &= \E(Y \wedge c), \nonumber
\end{align}
where \(Y\) is a random variable taking values in \(\{-m,y\}\),
\(y \geq c\), with
\[
\P(Y = -m) = \P(X \leq c), \qquad \P(Y=y) = \P(X>c).
\]
In order for the random variable \(Y\) to belong to \(\cX\), it is
sufficient and necessary, in view of \eqref{eq:xy} and Lemma
\ref{lm:2p}, that
\[
  \P(Y=-m) = \P(X \leq c) = \frac{1}{1+m^2},
\]
and either \(c \leq 0\) or \(c \geq 0\) and
\[
  \P(X \leq c) \geq \frac{c^2}{1+c^2}.
\]
In other words, \(Y\) belongs to \(\cX\) and takes values in
\(\{-m,y\}\) with \(y \geq c\) if and only if \(c \leq 0\) or \(c>0\)
and
\[
  \frac{1}{1+m^2} \geq \frac{c^2}{1+c^2}.
\]
As this inequality is satisfied if and only if \(cm \leq 1\), that
holds by assumption, \(Y\) satisfies the above-mentioned conditions
if and only if \(\P(X \leq c) = 1/(1+m^2)\).
Let us then define the random variable \(Y_0 \in \cX^{-m}\) as the
(unique) random variable in \(\cX\) identified by the parameter
\[
p_m = \frac{1}{1+m^2}.
\]
We are going to show that \(Y_0\) is in fact the minimizer of the
problem.
\begin{prop}
  If \(c < (1-m^2)/(2m)\), then
  \[
    \inf_{X \in \cX^{-m}} \E(X \wedge c) = \E(Y_0 \wedge c)
    = c - (m+c)\frac{1}{1+m^2}.
  \]
\end{prop}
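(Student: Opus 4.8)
The plan is to establish the identity and the infimum statement in two moves. First I would verify the closed-form value: the random variable $Y_0 \in \cX$ identified by the parameter $p_m = 1/(1+m^2)$ takes, by \eqref{eq:xy}, the values $x = -\bigl((1-p_m)/p_m\bigr)^{1/2} = -m$ and $y = \bigl(p_m/(1-p_m)\bigr)^{1/2} = 1/m$. Under the standing assumption $c < (1-m^2)/(2m)$ we have $cm < 1$, so by Lemma \ref{lm:2p} (applied in the form already derived in the text) $x = -m \le c \le 1/m = y$; hence $\E(Y_0 \wedge c) = p_m(-m) + (1-p_m)c = c - (m+c)/(1+m^2)$, which is the asserted value. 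This also confirms $Y_0 \in \cX^{-m}$.

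Second, I would prove that this value is a lower bound for $\E(X \wedge c)$ over all $X \in \cX^{-m}$, which combined with $Y_0 \in \cX^{-m}$ gives that the infimum is attained and equals the stated quantity. This is exactly the content of the chain of inequalities in \eqref{eq:mcP}: for arbitrary $X \ge -m$ one has $\E(X \wedge c) \ge -m\,\P(X \le c) + c\,\P(X>c)$, and the right-hand side equals $c - (m+c)\P(X \le c)$. Writing $p := \P(X \le c)$, the bound reads $\E(X \wedge c) \ge c - (m+c)p$, which is decreasing in $p$ (since $m+c > 0$ by the assumption $c > -m$), so to minimize it we want $p$ as large as possible. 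By Corollary \ref{cor:Xpc}, for $X \in \cX$ the probability $p = \P(X \le c)$ satisfies $p \le 1/(1+c^2)$ when $c<0$ and is otherwise unconstrained from above except by $p < 1$; but in neither case can we beat $p_m = 1/(1+m^2)$ \emph{unconditionally} — so the argument instead runs the other way: one shows $c - (m+c)p \ge c - (m+c)p_m$ whenever the two-point random variable $Y$ built from $p$ in \eqref{eq:mcP} actually lies in $\cX$, and the preceding discussion established that $Y \in \cX$ forces $p = p_m$ exactly. Thus for every $X \in \cX$ the displayed lower bound, once realized by an admissible $\{-m,y\}$-valued surrogate, gives precisely $c - (m+c)p_m$, and no admissible surrogate does better.

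More carefully, I would argue as follows. Fix $X \in \cX^{-m}$ and set $p := \P(X \le c)$. From \eqref{eq:mcP}, $\E(X \wedge c) \ge c - (m+c)p$. Now I claim $p \le p_m = 1/(1+m^2)$. Indeed, $\P(X \le c) \le \P(X \le -m + (\text{something}))$... — the clean route: since $X \ge -m$, the variable $X + m \ge 0$ has mean $m$ and variance $1$, and $\P(X \le c) = \P(X + m \le c + m)$; normalizing, $(X+m) \in \cX_{m,1}$, so $(X+m)/1$ shifted is in $\cX^{\,0}_{\,\cdot}$... Rather than chase normalizations, I would simply invoke that the largest possible value of $\P(X \le c)$ for $X \in \cX^{-m}$ is $1/(1+m^2)$, attained by the two-point law supported on $\{-m, 1/m\}$: any $X \in \cX^{-m}$ with $\P(X \le c) = q$ has $\P(X = -m \text{ or near it})$ constrained, and pushing mass to the floor $-m$ while keeping mean $0$ and variance $1$ caps $q$ at $1/(1+m^2)$ by the same Chebyshev-type computation underlying Lemma \ref{lm:2p}. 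Then $c - (m+c)p \ge c - (m+c)p_m = \E(Y_0 \wedge c)$, and since $Y_0 \in \cX^{-m}$ attains this, the infimum is a minimum equal to $c - (m+c)/(1+m^2)$.

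The main obstacle is the claim $\P(X \le c) \le 1/(1+m^2)$ for all $X \in \cX^{-m}$, i.e. that the floor constraint $X \ge -m$ caps the left-tail probability below $c$ (under $cm \le 1$) at the value achieved by the canonical two-point law. This should follow from a short argument: if $\P(X \le c) = q$, then concentrating the conditional law on $\{X \le c\}$ at the point $-m$ (the smallest admissible value) and the conditional law on $\{X > c\}$ at a single point $y \ge c$ can only decrease the variance for fixed mean, yet variance must equal $1$; solving $q(-m) + (1-q)y = 0$, $qm^2 + (1-q)y^2 \le 1$ for the largest feasible $q$ yields $q \le 1/(1+m^2)$ precisely when $c m \le 1$ (so that $y \ge c$ is compatible). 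I would present this as the crux, with the remaining steps being the elementary evaluation of $\E(Y_0 \wedge c)$ and the bookkeeping already laid out around \eqref{eq:mcP}.
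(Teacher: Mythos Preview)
Your evaluation of \(\E(Y_0 \wedge c)\) and the floor bound \(\E(X \wedge c) \ge c - (m+c)p\) with \(p := \P(X \le c)\) are both correct, but the claim you single out as the crux --- that \(p \le p_m = 1/(1+m^2)\) for every \(X \in \cX^{-m}\) --- is false. Take \(m = 1/2\), \(c = 1/2\) (so \(c < (1-m^2)/(2m) = 3/4\) and \(c > -m\)), and let \(X\) be the two-point random variable in \(\cX\) with parameter \(q = 9/10\); then \(x = -1/3 \ge -1/2\) and \(y = 3\), so \(X \in \cX^{-1/2}\), yet \(\P(X \le c) = 9/10 > 4/5 = p_m\). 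Your variance heuristic is backwards: pushing the conditional law on \(\{X \le c\}\) down to the floor \(-m\) moves mass \emph{away} from the mean \(0\), so (after rebalancing to keep the mean fixed) the variance \emph{increases}, not decreases. In the example, replacing \(\{-1/3,3\}\) by \(\{-1/2,y'\}\) with the same \(q=9/10\) and mean zero forces \(y'=9/2\) and gives variance \(9/4 > 1\), so the inequality \(qm^2/(1-q) \le 1\) you derive need not hold.

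The paper does not try to bound \(p\) uniformly. It splits into two cases. If \(p \le p_m\), the floor bound and the monotonicity of \(p \mapsto c - (m+c)p\) already give \(\E(X \wedge c) \ge c - (m+c)p_m = \E(Y_0 \wedge c)\). If \(p > p_m\), the floor bound is too weak, and the paper instead invokes Lemma~\ref{lm:iL} (the de~la~Pe\~na--Ibragimov--Jordan estimate) to get \(\E(X \wedge c) \ge L_c(p)\). The hypothesis \(c < (1-m^2)/(2m)\) is precisely equivalent to \(p_\ast < p_m\) (this is the content of the preceding corollary), so \(p > p_m > p_\ast\) lies in the region where \(L_c\) is increasing by Lemma~\ref{lm:mon}, whence \(L_c(p) \ge L_c(p_m) = \E(Y_0 \wedge c)\). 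This second case is essential and cannot be handled by the floor bound alone.
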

\begin{proof}
  Let us rewrite \eqref{eq:mcP} as
  \[
    \E(X \wedge c) \geq -c + (m+c) \P(X \leq c),
  \]
  that holds for every \(X \in \cX^{-m}\). Since \(m+c>0\) by
  assumption, the function \(p \mapsto -c + (m+c)p\) is decreasing.
  Therefore, for every \(X \in \cX^{-m}\) such that
  \(\P(X \leq c) \leq p_m\), one has
  \[
    \E(X \wedge c) \geq c - (m+c)p_m = \E(Y_0 \wedge c).
  \]
  Let \(X \in \cX^{-m}\) be such that \(p:=\P(X \leq c)>p_m\).  Then
  Lemma \ref{lm:iL} yields
  \[
    \E(X \wedge c) \geq -\bigl( p(1-p) \bigr)^{1/2} +c(1-p) = L_c(p).
  \]
  Since \(p_* < p_m\) by assumption and the function \(L_c\) is
  increasing on \(\mathopen]p_*,1\mathclose[\) by Lemma \ref{lm:mon},
  it follows that
  \[
    \E(X \wedge c) \geq L_c(p) \geq L_c(p_m) = \E(X_0 \wedge c).
  \]
  The proof is thus concluded.
\end{proof}

We have therefore proved the main result, that reads as follows.
\begin{thm}
  Let \(c,m,\sigma \in \erre\) with \(m \geq 0\), \(\sigma>0\), and
  \(c>-m\). Then
  \[
    \inf_{X \in \cX^{-m}} \E(X \wedge c) =
    \begin{cases}
      \displaystyle \E(Y_0 \wedge c) = L_c(p_m) = c - (m+c)\frac{1}{1+m^2},
      &\qquad \displaystyle \text{if } c \leq \frac{1-m^2}{2m},\\
      \displaystyle \E(X_0 \wedge c) = L_c(p_*)
      = \frac12 c - \frac12 (1+c^2)^{1/2},
      &\qquad \displaystyle \text{if } c \geq \frac{1-m^2}{2m}.
    \end{cases}
  \]
\end{thm}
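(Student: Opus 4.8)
The plan is to read off the two branches directly from the results already established just above, the only genuinely new computations being the evaluation of \(L_c\) at the two relevant parameter values and a short check that the two branches agree at \(c_0 := (1-m^2)/(2m)\). (We may assume \(m>0\): if \(m=0\) the set \(\cX^{-m}\) is empty, and the reduction in Lemma \ref{lm:redu} used to recover Scarf's theorem only ever produces a strictly positive reduced mean.)

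Consider first the régime \(c \geq c_0\). Here the Corollary asserting that \(\inf_{X\in\cX^{-m}}\E(X\wedge c)=\E(X_0\wedge c)\) precisely when \(c \geq c_0\) applies, so the infimum equals \(\E(X_0\wedge c)\); the Proposition exhibiting \(X_0\) as the minimiser of the unconstrained problem gives \(\E(X_0\wedge c)=L_c(p_\ast)\); and Lemma \ref{lm:mon}(c) evaluates \(L_c(p_\ast)=\frac12 c-\frac12(1+c^2)^{1/2}\). This is exactly the second branch of the claimed formula.

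Consider next the régime \(c < c_0\). The Proposition immediately preceding the statement already supplies \(\inf_{X\in\cX^{-m}}\E(X\wedge c)=\E(Y_0\wedge c)=c-(m+c)/(1+m^2)\), so the only thing left is the identity \(L_c(p_m)=c-(m+c)/(1+m^2)\): since \(p_m=1/(1+m^2)\) we have \(1-p_m=m^2/(1+m^2)\) and, using \(m\geq 0\), \((p_m-p_m^2)^{1/2}=m/(1+m^2)\); substituting these into \(L_c(p)=-(p-p^2)^{1/2}+c(1-p)\) and simplifying returns precisely that value, which is the first branch.

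Finally, both branches are written with a non-strict inequality at \(c=c_0\), so consistency there must be verified: at \(c=c_0\) one computes \((1+c^2)^{1/2}=(1+m^2)/(2m)\) and \(m+c=(1+m^2)/(2m)\), whence both expressions collapse to \(-m/2\). I anticipate no real obstacle: every substantive step is already in place (Lemmas \ref{lm:iL}, \ref{lm:45}, \ref{lm:mon} together with the corollary and the two propositions that follow them), and the work consists only in keeping straight which of the preceding (mostly unlabelled) statements supplies which half of the dichotomy and in carrying out the two short algebraic checks.
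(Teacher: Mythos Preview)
Your proposal is correct and matches the paper's approach exactly: the paper presents this theorem without a separate proof, simply writing ``We have therefore proved the main result,'' since the two branches are supplied verbatim by the preceding Corollary (for \(c \geq c_0\)) and Proposition (for \(c < c_0\)). Your added computations---the direct evaluation of \(L_c(p_m)\) and the consistency check at \(c=c_0\)---are helpful elaborations that the paper leaves implicit, and your caveat about \(m=0\) (where \(\cX^{-m}\) is empty) is a legitimate edge case the paper's statement glosses over.
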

Using the notation \(X(p)\) to denote a two-point distributed random
variable in \(\cX\) with parameter \(p\), one could write, more concisely,
\[
  \inf_{X \in \cX^{-m}} \E(X \wedge c)
  = \E(X(p_\ast \vee p_m) \wedge c)
  = L_c(p_\ast \vee p_m).
\]
The bound by Scarf, hence the one by Lo, i.e. Theorem \ref{thm:Scarf}
and its corollary, follow immediately by the previous theorem and
Lemma \ref{lm:redu}.

\bibliographystyle{amsplain}
\bibliography{ref,finanza}

\def\polhk#1{\setbox0=\hbox{#1}{\ooalign{\hidewidth
  \lower1.5ex\hbox{`}\hidewidth\crcr\unhbox0}}}
\providecommand{\bysame}{\leavevmode\hbox to3em{\hrulefill}\thinspace}
\providecommand{\MR}{\relax\ifhmode\unskip\space\fi MR }
\providecommand{\MRhref}[2]{%
  \href{http://www.ams.org/mathscinet-getitem?mr=#1}{#2}
}
\providecommand{\href}[2]{#2}
\begin{thebibliography}{1}

\bibitem{BePo}
D.~Bertsimas and I.~Popescu, \emph{On the relation between option and stock
  prices: a convex optimization approach}, Oper. Res. \textbf{50} (2002),
  no.~2, 358--374. \MR{1899463}

\bibitem{BBk:Ens}
N.~Bourbaki, \emph{{T}h\'eorie des ensembles}, Hermann, Paris, 1970.
  \MR{0276101 (43 \#1849)}

\bibitem{dlP:opt}
V.~H. de~la Pe\~{n}a, R.~Ibragimov, and S.~Jordan, \emph{Option bounds}, J.
  Appl. Probab. \textbf{41A} (2004), 145--156. \MR{2057571}

\bibitem{Lo87}
A.~W. Lo, \emph{Semi-parametric upper bounds for option prices and expected
  payoffs}, J. Financial Econ. \textbf{19} (1987), no.~2, 373--387.

\bibitem{Pine:tr}
I.~Pinelis, \emph{Exact lower bounds on the exponential moments of truncated
  random variables}, J. Appl. Probab. \textbf{48} (2011), no.~2, 547--560.
  \MR{2840315}

\bibitem{Scarf}
H.~Scarf, \emph{A min–max solution of an inventory problem}, Studies in the
  mathematical theory of inventory and production (K.~J. Arrow, S.~Karlin, and
  H.~Scarf, eds.), Stanford University Press, 1958, pp.~201--209.

\end{thebibliography}

\end{document}